\title{\bf Nonlinear elliptic equations \\ with high order singularities}
\author{ \textsc{Eduardo V. Teixeira} \\ \text{\footnotesize Universidade Federal do Cear\'a}  \\ \text{\footnotesize Fortaleza, CE, Brazil} }
\date{}
\def \dist {\mathrm{dist}}
\def \Z {\mathscr{Z}}
\newtheorem{theorem}{Theorem}
\newtheorem{lemma}[theorem]{Lemma}
\newtheorem{corollary}[theorem]{Corollary}
\theoremstyle{definition}
\newtheorem{definition}{Definition}
\theoremstyle{remark}
\numberwithin{equation}{section}
\newcommand{\intav}[1]{\mathchoice {\mathop{\vrule width 6pt height 3 pt depth  -2.5pt
\kern -8pt \intop}\nolimits_{\kern -6pt#1}} {\mathop{\vrule width
5pt height 3  pt depth -2.6pt \kern -6pt \intop}\nolimits_{#1}}
{\mathop{\vrule width 5pt height 3 pt depth -2.6pt \kern -6pt
\intop}\nolimits_{#1}} {\mathop{\vrule width 5pt height 3 pt depth
-2.6pt \kern -6pt \intop}\nolimits_{#1}}}
\begin{document}
\maketitle

\begin{abstract}
We study non-variational degenerate elliptic equations with high order singular structures. No boundary data are imposed and singularities occur along an {\it a priori} unknown interior region. We prove that positive solutions have a universal modulus of continuity that does not depend on their infimum value. We further obtain sharp, quantitative regularity estimates for non-negative limiting solutions. 
\smallskip

\noindent \textit{MSC:} 35B65, 35J60.

\smallskip

\noindent \textbf{Keywords:} Singular {PDEs}, regularity theory.

{
}

\end{abstract}

\section{Introduction} \label{Sct intro}

The theory of singular elliptic equations and its innate backgrounds arise in several areas of pure and applied analysis: theory of fluid mechanics, superconductivity, dynamics of thin films, quenching phenomena, microelectromechanical type of systems,  rupture problems, geometric measure theory, calculus of variations, differential geometry, free boundary theory,  etc. Many of those problems can be modeled in a general framework:
\begin{equation} \label{intro eq01}
	\mathfrak{L}(X,u,Du, D^2 u) = f(X,u),  \quad \mathcal{O} \subset \mathbb{R}^n,
\end{equation}
where $\mathfrak{L}$ is elliptic with respect to the Hessian argument; however ellipticity degenerates along the some {\it a priori} unknown region.  Physical interpretations of the models usually require sign conditions upon the forcing term $f(X, u)$ and impose sign constraint on existing solution $u$, say, $u> 0$; nonetheless, the key estimates are the ones that do not depend upon lower bounds for its infimum.

\par
\medskip

Regularity issues related to such a class of equations are central problems in the modern study of non-linear PDEs. The mathematical analysis of positive solutions to linear isotropic equations with prescribed singular set:
\begin{equation}\label{CRT}
	 \Delta u \sim \dfrac{1}{u^p}, \text{ in } \Omega, \quad u\Big |_{\partial \Omega} = 0,	
\end{equation}
has promoted important development in the theory of 2nd order elliptic equations in the last forty years or so. A very large literature dealing with such a class of problems has evolved from the pioneering work \cite{crt}. In such eruditions, though, singularities occur along a prescribed, fixed, smooth boundary, $\partial \mathcal{O}$. Obtaining appropriate regularity estimates for solutions to singular PDEs with no geometric or smoothness {\it a priori} knowledge on the singular set has been a primary problem since then --- this is the first main issue in the analysis of free boundary type of problems.

\par
\medskip

Isotropic PDEs with free singular sets often arise from critical point theory of non-differentiable functionals. Namely, given a real parameter $t$, one can formally look at energy functionals of the form
$$
	\mathscr{J}_t(u) := \int \left ( |\nabla u|^2 + \lambda(\zeta)u^{t} \chi_{\{u>0\}} \right ) \ d\zeta,
$$
where $\lambda$ is a bounded function.  When $t> 1$, the functional is differentiable and usual methods in the Calculus of Variations can be employed. The case $t = 1$ refers to the well known obstacle problem, see \cite{C, CK} among many other works on this subject. For $0< t <1$, the functional $\mathscr{J}_t$ is continuous but non-differentiable, see for instance \cite{AP}.  Within this range, solutions are continuously differentiable along the interface $\{u=0\}$. The borderline case $t = 0$ bears upon the cavitation problem, \cite{AC}. 

\par 
\medskip

In this paper we treat non-variational elliptic equations with high order singularities of the type
\begin{equation} \label{intro eq03}
	 {u}^\alpha |Du|^\beta \Delta u  = f(X) \cdot  \chi_{\{u>0\}},
\end{equation}
for $\beta \ge 0$ and $-1-\beta < \alpha < 1$, in the sense  of approximating limiting solutions, to be better explained when time comes. For now, it is enlightening to notice that singularities take place along the zero set $\mathscr{Z}(u)$ as well as along the set of critical points $\mathscr{C}(u)$,  which are {\it a priori} unknown sets. No smoothness or geometric information are previously granted upon them. 

The theory of elliptic equations for which ellipticity degenerates solely at the set of critical points, i.e., $\alpha =0$ in \eqref{intro eq01}, has experienced an impressive progress through this past decade, see \cite{BD1, DFQ, IS, ART}  among several other works on this subject. Quite recently, Imbert and Silveste established an important breakthrough in the field. They manage to show that Harnack inequality is still valid when the equation is assumed to be elliptic only when the gradient is large, \cite{IS2}. 

The problem involving singularities upon the zero order term, the case $\alpha \not = 0$, though, is more delicate. While for the variational theory, i.e. the study of solutions coming from a minimization problems, there have been some recent advances,  the appropriate non-variational theory seems much less developed. Non-variational solutions to \eqref{intro eq03} ought to be built up through a limiting process out from positive solutions, or alternatively, by solutions of non-singular approximating equations. To support such an approach, though, one needs  a compactness theorem for positive solutions that is independent of the infimum value. Hence, as to unlock the study of non-variational solutions to singular elliptic equations one initially needs to obtain a universal modulus of continuity  for weak solutions to elliptic partial differential equations with high order singularities. This is the first main result we prove in this article.

\par
\medskip 

A positive function $u$ satisfying Equation  \eqref{intro eq03} in the viscosity sense is of class $C^{1,\frac{1}{1+\beta}}$, see \cite{ART, IS}. Of course such an estimate blows-up as $\inf u \to 0$.  The ultimate goal of this current paper is to prove  that nonnegative limiting solutions to  \eqref{intro eq03} are $C^{\gamma}$ smooth, along their singular set, for the sharp value $\gamma$ given by the algebraic relation
\begin{equation}\label{def gamma}
	\gamma  := \frac{2+\beta}{1+\beta+\alpha}. 
\end{equation}
As in the free boundary theory, the optimal regularity estimate plays a decisive role in the geometric understanding of solutions near their singular points. 

\par
\medskip 

We conclude this Introduction by mentioning that the methods designed for the proof of  the main Theorems presented in this work are, in their very own nature, of non-linear character. The very same universal continuity property as well as the sharp regularity estimate hold true, with essentially the same proofs, if the Laplacian is replaced by a uniform elliptic fully non-linear operator  $F(x,D^2u)$ with ``continuous coefficients". Of course, in this case, the universal estimates depend also upon the ellipticity constants of $F$ and the modulus of continuity of the coefficients. We have chosen to work on the simpler case, as to highlight the novelties and main ideas herein designed for the proofs of such results.

\section{Hypotheses and main results}

We start off this section by commenting on the sign assumption assumed on the forcing term $f(X)$. Clearly if $f\le 0$, positive solutions are superharmonic and then the limiting free boundary is empty. The interesting case, even from applied viewpoint, is when we assume the existence of a constant $c_0>0$ such that
\begin{equation} \label{sign cond}
	c_0 \le  f(X)  \le c_0^{-1}.
\end{equation}
Such a condition will be enforced hereafter in this paper. In addition, we shall also assume uniform continuity of $f$. Recall that a modulus of continuity is a nondecreasing function $\sigma \colon (0, \infty) \to (0,\infty)$ satisfying $\sigma(0^{+}) = 0$. A function is said to be $\sigma$-continuous in $\Omega \subset \mathbb{R}^n$, if
$$
	\left | f(X) - f(Y) \right | \le \sigma \left ( \left |X-Y \right | \right ).
$$

\medskip 

Anisotropic equations as \eqref{intro eq03} are genuinely non-variational, even when diffusion is ruled by the Laplacian. Hence, the notion of solutions cannot be based upon the language of measures or distributions. Rather, the appropriate notion of approximating solutions rests upon the method of building up {\it physical} nonnegative solutions as the limit of positive ones. This is done by means of uniform estimates that do not depend upon the infimum of the solutions. 
We comment that, alternatively, one could consider nonsingular approximating equations, 
\begin{equation} \label{eq_approx}
			\zeta_{\alpha, \epsilon}({u} ) \cdot |Du|^{\beta} \cdot \Delta u = f(X), 
\end{equation}
where $\zeta_{\alpha, \epsilon}(t)\to t^\alpha$ in $(0, \infty)$, and obtain estimates that do not deteriorate as the smoothing parameter $\epsilon$ tends to zero.

Either way, a universal compactness result paves the way for  the theory of limiting solutions. This is our first main goal in this current article.
 
\begin{definition} \label{def LS}  A nonnegative function $u \in C(\mathcal{O})$ is said to be a limiting solution to the non-variational singular equation
	\begin{equation}\label{eq def LS}
		u_0^\alpha \cdot |Du_0|^\beta \cdot \Delta u_0 = f(X) \chi_{\{u_0 > 0 \}},
	\end{equation}
if there exists a sequence of positive functions $u_{j}$, satisfying  
$$
	u_j^\alpha \cdot |Du_j|^\beta \cdot \Delta u_j = f(X)
$$
in the viscosity sense, converging locally uniformly to $u_0$ in $\mathcal{O}$.
\end{definition}

Before presenting the main results proven in this work, let us declare that any constant or entity that depends only on dimension, $\|u\|_\infty$, $\alpha$ and $\beta$, $c_0$ and the modulus of continuity of $f$ will be called {\it universal}. We now pass to discuss the central theorems to be delivered in this manuscript. The first key result we prove is a universal compactness theorem for positive solutions to singular equations.

\begin{theorem}\label{main_prop} Let $v$ be a bounded positive viscosity solution to 
\begin{equation}\label{Eq sct aux}
	u^\alpha |Du|^\beta \Delta u = f(X),
\end{equation}
in $\mathcal{O}\subset \mathbb{R}^n$, with $\beta \ge 0$, $1>\alpha > -(1+\beta)$ and $f$ satisfying \eqref{sign cond}. Given a subdomain $\mathcal{O}' \Subset \mathcal{O}$, there exists a modulus of continuity $\varpi$, depending only on universal parameters and $\mathcal{O}' $, such that $u$ is $\varpi$-continuous in $\mathcal{O}'$. 
\end{theorem}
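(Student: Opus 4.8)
The plan is to establish a universal modulus of continuity for positive solutions $v$ of \eqref{Eq sct aux} by a compactness/contradiction argument combined with a careful barrier analysis that exploits the sign condition \eqref{sign cond} on $f$. The first observation is that away from the set $\{v \ll 1\}$ the equation is uniformly elliptic with bounded right-hand side (modulo the degeneracy $|Dv|^\beta$, which is handled by the $C^{1,1/(1+\beta)}$ theory of \cite{ART,IS}), so interior estimates are standard there; all the work is in controlling oscillation where $v$ is small. Fix $\mathcal{O}' \Subset \mathcal{O}$ and a point $X_0 \in \mathcal{O}'$, and let $r_0 = \mathrm{dist}(\mathcal{O}', \partial\mathcal{O})/2$. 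I would prove the quantitative dichotomy: there exist universal $\mu \in (0,1)$ and $\rho \in (0,1)$ such that if $v(X_0) = m$ and $\osc_{B_r(X_0)} v \le \kappa$ for some $r \le r_0$, then $\osc_{B_{\rho r}(X_0)} v \le \mu \kappa$ — provided $\kappa$ is not already below a scale-dependent threshold — and iterate to get the modulus $\varpi$.

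The core of the argument is the following sub-barrier construction, which is the quantitative engine behind the ``no dependence on $\inf v$'' phenomenon. Suppose $v$ is positive and small, say $\sup_{B_r} v = \kappa$, and $v$ touches a low value $m$ somewhere. Because $f \ge c_0 > 0$, the equation $v^\alpha|Dv|^\beta \Delta v = f \ge c_0$ forces $\Delta v$ to be large and positive wherever $v$ and $|Dv|$ are small simultaneously; in particular $v$ cannot be too flat near a near-zero value without violating the equation. One makes this precise by comparing $v$ from below with an explicit radial function of the form $w(X) = m + A|X - Y|^\gamma$ with $\gamma = \frac{2+\beta}{1+\beta+\alpha}$ as in \eqref{def gamma}: a direct computation gives $w^\alpha |Dw|^\beta \Delta w \sim A^{1+\beta+\alpha}\gamma^{1+\beta}(\gamma-2+n)\,(\text{lower order in } m)$ — the exponents are rigged precisely so the $m$-dependence cancels at leading order — so for $A$ chosen universally large relative to $c_0$, $w$ is a strict viscosity subsolution on an annulus, and the comparison principle for \eqref{Eq sct aux} (available since $v_j$ are positive viscosity solutions) propagates the growth of $v$ away from its low values. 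This yields a universal growth rate which, fed into a standard oscillation-decay iteration, produces $\varpi$. The passage $v \to v_0$ is not needed here since the theorem is stated for the positive solutions themselves; I would keep everything at the level of $v$ and only use positivity to justify comparison.

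The main obstacle I anticipate is the interaction between the two degeneracies: the barrier computation must remain valid in the regime where $|Dv|$ is small but $v$ is not, and vice versa, and one has to rule out the possibility that $v$ develops a very shallow ``almost-critical-point'' plateau at a positive height that is nonetheless invisible to a single radial barrier. This forces a case analysis — one case using the barrier above centered where $v$ is smallest, another using the $C^{1,1/(1+\beta)}$ estimate (with a constant blowing up like a negative power of the local infimum, which must then be absorbed by the smallness of the oscillation scale) — and the delicate point is to choose the iteration radius $\rho$ and the threshold relating $\kappa$ to $r$ so that these two mechanisms overlap and cover all scales. A secondary technical point is justifying the comparison principle in the viscosity sense for the fully degenerate operator $v^\alpha|Dv|^\beta\Delta v$ against the non-smooth barrier $w$ at the vertex $X = Y$; since the barrier is used only on an annulus bounded away from its vertex, this should reduce to the standard comparison principle for degenerate elliptic equations, but it needs to be stated carefully.
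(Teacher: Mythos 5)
Your plan has two genuine gaps, and they sit exactly where the theorem's difficulty lies. First, the comparison principle you invoke is not available for \eqref{Eq sct aux}: written as $\Delta u - f(X)u^{-\alpha}|Du|^{-\beta}=0$, the zeroth-order dependence has the \emph{wrong} monotonicity when $\alpha>0$ and $f\ge c_0>0$ (the operator is not proper), so the standard viscosity comparison theorem does not apply, and the usual touching argument breaks down precisely at the step you need: if $w=m+A|X-Y|^\gamma$ is a strict classical subsolution and you try to conclude $v\ge w$ by contradiction, the test function at the contact point is $w+c$ with $c<0$, and since $(w+c)^\alpha<w^\alpha$ the supersolution inequality for $v$ and the strict subsolution inequality for $w$ are compatible --- no contradiction arises. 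The positivity of the $v_j$ does not rescue this; it is a structural feature of the equation, and it is the reason the paper never compares $v$ with a barrier at all: its proof runs the Ishii--Lions doubling of variables on $v(\varrho X)-v(\varrho Y)-\mathrm{L}\,\omega(|X-Y|)-\kappa(|X|^2+|Y|^2)$, applies the Jensen--Ishii lemma to get jets at $\bar X,\bar Y$, and exploits the elementary but decisive observation that $v(\varrho\bar X)-v(\varrho\bar Y)>\delta$ forces the quantitative gap $v^{-1}(\varrho\bar Y)-v^{-1}(\varrho\bar X)\ge \delta/\|v\|_\infty^2$, which contradicts the near-equality of $v^{-\alpha}$ at the two points that the equation imposes through the trace inequality --- all of this independent of $\inf v$, and with no comparison principle needed since both jets belong to the same function $v$.

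Second, even if comparison were granted, your barrier points in the wrong direction for a modulus of continuity: $v\ge m+A|X-Y|^\gamma$ is a non-degeneracy/growth estimate from below, and it does not prevent $v$ from oscillating upward rapidly near a point where it is small. The content of Theorem \ref{main_prop} is the \emph{upper} control of the oscillation, uniform in $\inf v$, and your proposal delegates exactly that half to an oscillation-decay iteration using the interior $C^{1,\frac{1}{1+\beta}}$ estimate, whose constant blows up like a negative power of the local infimum; you acknowledge this must be "absorbed by the smallness of the oscillation scale" but give no mechanism for it, and no such absorption is possible at points where $v$ is small compared to every power of the scale --- this is the regime the theorem is about. (The upper growth bound $\sup_{B_r}u_0\le Cr^\gamma$ is the separate Theorem \ref{MAIN}, proved in the paper for limiting solutions via the Harnack-based flatness Lemma \ref{Flat Lemma1}, not via barriers, and it concerns free boundary points rather than a modulus of continuity for positive solutions.) Your barrier computation itself is fine, and the remark that the problem localizes to the region $\{v\ll 1\}$ is correct, but as it stands the proposal neither justifies its comparison step nor supplies the ingredient that makes the estimate independent of $\inf v$.
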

\medskip 

The main tool used in the proof of Compactness Theorem \ref{main_prop} is the so called Ishii-Lions method, see for instance \cite{IL, BCI, AT}. We will carry it out in Section \ref{sct aux}. Of particular interest, Theorem \ref{main_prop} allows us to develop the theory of limiting solutions to singular elliptic equations with free boundaries
\begin{equation}\label{eq fb}
	u_0^\alpha|Du_0|^\beta \Delta u_0 = f(X) \chi_{\{u_0>0 \}}, \quad \text{ in } \mathcal{O} \subset \mathbb{R}^n,
\end{equation}
as forecasted by Definition \ref{def LS}. 

\medskip 

Let us know turn our attention to the regularity theory for limiting solutions of the free boundary problem \eqref{eq fb}. If follows from the Theorem proven in \cite{ART, IS} that $u_0$ is locally of class $C^{1,\frac{1}{1+\beta}}$ within its positive set. The major, key issue though is to understand the optimal growth behavior of such a function along the free boundary $\partial \{u_0 > 0 \}$. Such a sharp estimate is given by the next result.

\begin{theorem}\label{MAIN}   Let $u_0$ be a bounded limiting solution to   \eqref{eq fb} with $\beta \ge 0$, $1>\alpha > -(1+\beta)$ and $f$ satisfying \eqref{sign cond}. Fixed a subdomain $\mathcal{O}' \Subset \mathcal{O}$,  there exists a constant $C\ge 1$, depending only upon universal parameters and  $\dist (\partial \mathcal{O}', \partial \mathcal{O})$, such that if $Z \in \partial \{u_0> 0 \} \cap \mathcal{O}'$, then 
$$
	\sup\limits_{B_r(Z)} u_0 \le C r^{\frac{2+\beta}{1+\beta+\alpha}},
$$
for any $r < \dist (Z, \partial \mathcal{O}')$.
\end{theorem}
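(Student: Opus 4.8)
The plan is to establish the sharp growth estimate by a contradiction-compactness argument combined with an iterative (dyadic) control of the oscillation, in the spirit of the methods pioneered in geometric regularity theory. I would first normalize: it suffices to show that there exists a universal $C$ such that whenever $Z \in \partial\{u_0 > 0\} \cap \mathcal{O}'$ and, say, $r = \dist(Z, \partial\mathcal{O}')$, one has $\sup_{B_\rho(Z)} u_0 \le C \rho^\gamma$ for all dyadic $\rho = 2^{-k}r$, with $\gamma = \frac{2+\beta}{1+\beta+\alpha}$; the full statement then follows by interpolating between dyadic scales. Since $u_0$ is a limiting solution, I would carry the argument out at the level of the positive approximating solutions $u_j$ (or the $\epsilon$-regularized equation \eqref{eq_approx}), proving the estimate with constants independent of $j$ (resp. $\epsilon$) and of $\inf u_j$, then passing to the limit using the local uniform convergence.

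\textbf{The iteration.} The heart is a discrete iteration lemma: there exist universal $\lambda \in (0,1)$ and $C_0 \ge 1$ such that for a positive solution $v$ of \eqref{Eq sct aux} (or its regularization) normalized so that $v(0) \le \lambda^{k\gamma}$ is not yet known but $\sup_{B_1} v \le 1$, one has the implication ``$\sup_{B_{\lambda^k}} v \le \lambda^{k\gamma} \Rightarrow \sup_{B_{\lambda^{k+1}}} v \le \lambda^{(k+1)\gamma}$''. Equivalently, after rescaling $v_k(x) := \lambda^{-k\gamma} v(\lambda^k x)$, which — crucially — satisfies an equation of the \emph{same} structure because the exponent $\gamma$ is chosen precisely so that the scaling is invariant (one checks $v_k^\alpha |Dv_k|^\beta \Delta v_k = \lambda^{k(2 + \beta - \gamma(1+\beta+\alpha))} f(\lambda^k x) = f(\lambda^k x)$ using \eqref{def gamma}), it suffices to prove the single-step statement: \emph{if $v$ is a positive solution of $v^\alpha |Dv|^\beta \Delta v = f$ in $B_1$ with $0 \le v \le 1$ and $v(0)$ small (i.e., $0$ is near the free boundary), then $\sup_{B_\lambda} v \le \lambda^\gamma$}, provided $\lambda$ is chosen universally small. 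I would prove this by contradiction: if it fails there is a sequence $v_m$ of positive solutions with $v_m(0) \to 0$, forcing terms $f_m$ satisfying \eqref{sign cond}, yet $\sup_{B_\lambda} v_m > \lambda^\gamma$. By Theorem \ref{main_prop}, the $v_m$ are locally equicontinuous uniformly in $m$, so along a subsequence $v_m \to v_\infty$ locally uniformly, with $v_\infty \ge 0$, $v_\infty(0) = 0$, and $\sup_{B_\lambda} v_\infty \ge \lambda^\gamma$. The limit $v_\infty$ is a limiting solution; on $\{v_\infty > 0\}$ it solves the singular equation, and since $v_\infty$ touches zero at the origin I would derive from a barrier / comparison argument the bound $v_\infty(x) \le C_* |x|^\gamma$ near $0$ — here one uses the explicit radial supersolution $w(x) = A|x|^\gamma$, for which $w^\alpha |Dw|^\beta \Delta w$ is a positive constant times $A^{1+\alpha+\beta}|x|^{\gamma(1+\alpha+\beta) - (2+\beta)} = $ const $\cdot A^{1+\alpha+\beta}$, again by the algebraic identity \eqref{def gamma} — and comparing with $v_\infty$ in small balls where $f \le$ const. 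Choosing $\lambda$ so small that $C_* \lambda^\gamma < \lambda^\gamma$ is not how it works; rather one shows $v_\infty \equiv 0$ in a fixed small ball or that $\sup_{B_\lambda} v_\infty \le \frac{1}{2}\lambda^\gamma$, contradicting $\sup_{B_\lambda} v_\infty \ge \lambda^\gamma$ for $\lambda$ universal. (When $\alpha > -(1+\beta)$ the exponent $\gamma$ is positive and the barrier construction is legitimate; this is exactly where the hypothesis on $\alpha$ enters.)

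\textbf{Assembling and the main obstacle.} Granting the one-step lemma, the dyadic estimate $\sup_{B_{\lambda^k}(Z)} u_0 \le \lambda^{k\gamma}$ follows by induction on $k$ at each free boundary point $Z \in \mathcal{O}'$, with the base case $k=0$ provided by the a priori sup bound $\|u_0\|_\infty$ (absorbing it into $C$), and the inductive step provided by applying the lemma to the rescaled function, valid because the rescaled equation has the same form and because $Z$ lies on the free boundary so the smallness hypothesis $v(0) = 0$ at the rescaled origin holds exactly in the limit. Filling in the dependence on $\dist(\partial\mathcal{O}', \partial\mathcal{O})$ is just bookkeeping in the rescaling. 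I expect the main obstacle to be the \textbf{compactness/limit step}: one must ensure that the limit $v_\infty$ of positive approximating solutions really is a \emph{limiting solution} in the sense of Definition \ref{def LS} and, more delicately, that it obeys the correct equation on $\{v_\infty > 0\}$ with a forcing term still satisfying \eqref{sign cond} — i.e., that no degeneration or loss of ellipticity occurs in the passage to the limit away from the free boundary — and that the comparison with the radial barrier can be justified in the viscosity sense across the (a priori unknown, possibly irregular) free boundary. The second subtle point is verifying that the scaling $v_k(x) = \lambda^{-k\gamma}v(\lambda^k x)$ preserves both the structural assumptions and the modulus of continuity from Theorem \ref{main_prop} uniformly in $k$; this requires that Theorem \ref{main_prop}'s modulus $\varpi$ be applied on a fixed reference ball after rescaling, which is fine because $\gamma \le 2$ makes the rescaling non-expansive in the relevant regime, but the bookkeeping must be done carefully so that the constant $C$ in the final statement does not blow up with $k$.
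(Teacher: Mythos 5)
Your skeleton coincides with the paper's (exploit the invariance of the equation under $u\mapsto \lambda^{-\gamma}u(\lambda\,\cdot)$, iterate a one-step improvement along dyadic balls at the level of the positive approximations $u_j$, then send $j\to\infty$), but the way you propose to prove the one-step improvement has a genuine gap. You keep the forcing of unit size, $c_0\le f\le c_0^{-1}$, and want to conclude by compactness (via Theorem \ref{main_prop}) plus a barrier that the limit $v_\infty$ of the contradiction sequence satisfies $v_\infty(x)\le C_*|x|^\gamma$, hence $\sup_{B_\lambda}v_\infty\le\tfrac12\lambda^\gamma$. The barrier step does not work as sketched: for $w=A|x|^\gamma$ one has $w^\alpha|Dw|^\beta\Delta w=\gamma^{1+\beta}(n+\gamma-2)\,A^{1+\alpha+\beta}$, so $w$ is a \emph{supersolution} (the only kind usable for an upper bound) only when $A\le A_0(c_0,n,\alpha,\beta)$, a fixed constant; with $\|v_\infty\|_\infty$ normalized to $1$ such a barrier does not dominate $v_\infty$ on $\partial B_\rho$ for any admissible $\rho\le 1$ unless $A_0\rho^\gamma\ge 1$, so the comparison cannot even be initialized (large $A$ makes $w$ a subsolution, which serves non-degeneracy, not the upper bound). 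Moreover the operator $u^\alpha|Du|^\beta\Delta u-f$ is not proper — it is increasing in $u$ wherever $\Delta u>0$, which the equation forces — so no off-the-shelf comparison principle applies, and it would in any case have to be justified for $v_\infty$, which satisfies the equation only on the unknown set $\{v_\infty>0\}$. Your fallback options are no better: ``$v_\infty\equiv 0$ in a small ball'' is false in general (limits of Perron solutions are non-degenerate, $\sup_{B_r}u_0\sim r^\gamma$), and ``$\sup_{B_\lambda}v_\infty\le\tfrac12\lambda^\gamma$'' is precisely the growth estimate being proven, so the argument becomes circular exactly at its crucial point.

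The ingredient you are missing is the one the paper supplies in Lemma \ref{Flat Lemma1}: make the right-hand side \emph{small}, $\|f\|_\infty\le\eta^{1+\alpha+\beta}$, by a single fixed zoom $\varrho=\eta_\star^{1/\gamma}\|f\|_\infty^{-1/(2+\beta)}$ (smallness is then preserved by the $\gamma$-scaling), and prove the improvement by the Harnack inequality rather than by barriers: normalizing $v=u/\iota$ with $\iota=\inf_{B_{1/2}}u\le\eta$ gives $|Dv|^\beta\Delta v=v^{-\alpha}f/\iota^{1+\alpha+\beta}$, which is bounded precisely because $\|f\|_\infty\le\iota^{1+\alpha+\beta}$ and $v\ge1$ on $B_{1/2}$, so Harnack propagates the smallness of the infimum to the supremum with constants independent of $\inf u$ — no comparison principle and no knowledge of the free boundary needed. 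With that replacement, the rest of your scheme (choosing $j_k$ so that $\lambda^{-k\gamma}u_j(0)\le\eta$, induction on $k$, interpolation between dyadic radii) goes through essentially as in the paper. A side remark: your claim ``$\gamma\le 2$'' is false when $\alpha<-\beta/2$ (e.g.\ $\beta=0$, $\alpha$ close to $-1$ gives arbitrarily large $\gamma$), but nothing in the correct argument needs it.
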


A direct consequence of Theorem \ref{MAIN} is the sharp control of the  value of $u$ at a point off the free boundary in terms of its distance to the zero set $\Z(u) := \{u_0 = 0\}$.

\begin{corollary}\label{Cor} Let $u_0$ be as in the statement of Theorem \ref{MAIN}. Then for any point $X \in \{u_0 > 0 \}$ with $\dist(X, \Z(u))< \dist(X, \partial \mathcal{O})$, there holds
$$
	u_0(X) \le C \cdot \dist(X, \Z(u_0))^{\frac{2+\beta}{1+\beta+\alpha}}.
$$
\end{corollary}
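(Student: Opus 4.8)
The plan is to read the corollary as a direct consequence of Theorem~\ref{MAIN}, obtained by centering the growth estimate at a nearest free boundary point. Write $\gamma := \frac{2+\beta}{1+\beta+\alpha}$ as in \eqref{def gamma}. Fix $X \in \{u_0 > 0\}$ with $d := \dist(X, \Z(u_0)) < \dist(X, \partial \mathcal{O})$. Since $u_0 \in C(\mathcal{O})$, the set $\Z(u_0) = \{u_0 = 0\}$ is relatively closed, and because $d < +\infty$ the distance is attained at some $Z \in \Z(u_0)$ with $|X - Z| = d$.

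First I would verify that $Z$ is a free boundary point. By definition of $d$, the open ball $B_d(X)$ meets no zero of $u_0$, hence $B_d(X) \subset \{u_0 > 0\}$; since $Z \in \partial B_d(X)$, the point $Z$ is a limit of points of $\{u_0 > 0\}$, so $Z \in \overline{\{u_0 > 0\}}$, while $u_0(Z) = 0$ excludes $Z$ from the open set $\{u_0 > 0\}$. Therefore $Z \in \partial\{u_0 > 0\}$, and moreover $\dist(Z, \partial\mathcal{O}) \ge \dist(X, \partial\mathcal{O}) - d > 0$. Next, as $X \in B_{2d}(Z)$, Theorem~\ref{MAIN} applied at $Z$ with radius $2d$ yields
$$
u_0(X) \le \sup_{B_{2d}(Z)} u_0 \le C\,(2d)^{\gamma} = \big(2^{\gamma} C\big)\, d^{\gamma},
$$
which, after relabeling the constant, is the asserted inequality; the slight enlargement from radius $d$ to $2d$ is just the harmless adjustment needed to pass from the closed ball $\overline{B_d(Z)}$ (where $X$ sits) to a ball on which Theorem~\ref{MAIN} is stated.

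The only remaining point is the bookkeeping between the admissibility hypotheses of Theorem~\ref{MAIN} (namely $Z \in \mathcal{O}'$ and $2d < \dist(Z,\partial\mathcal{O}')$) and the hypothesis $d < \dist(X,\partial\mathcal{O})$ of the corollary: one fixes a fixed interior subdomain for $X$, so that the nearest free boundary point $Z$ still lies in the region where Theorem~\ref{MAIN} applies once $d$ is below a threshold radius $\rho$ comparable to $\dist(\partial\mathcal{O}',\partial\mathcal{O})$; and when $d \ge \rho$ one simply invokes $u_0(X) \le \|u_0\|_\infty \le (\|u_0\|_\infty \rho^{-\gamma})\, d^{\gamma}$ and enlarges $C$ accordingly. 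I expect this geometric normalization — nesting the subdomains so $Z$ remains admissible — to be the only real obstacle; all of the analytic substance is already carried by Theorem~\ref{MAIN}.
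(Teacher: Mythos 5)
Your argument is correct and is exactly the intended one: the paper states the corollary as a direct consequence of Theorem \ref{MAIN}, obtained precisely by centering the growth estimate at a nearest zero point (which, as you verify, lies on $\partial\{u_0>0\}$) and adjusting constants. The additional bookkeeping you include (nesting subdomains for small $d$, and the trivial bound $u_0\le \|u_0\|_\infty$ for large $d$) is a legitimate and complete way to handle the admissibility of $Z$, which the paper leaves implicit.
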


\bigskip

It is also interesting to observe that positive solutions to the critical equation $u^{-(1+\beta)} |\Delta u|^\beta \Delta u = f(x)$ is universally bounded by below. A proof of this fact is based on a barrier argument, and we omit here.

We conclude this Section by commenting that nonnegative solutions obtained as limit of minimal solutions are non-degenerate, i.e., 
$$
	\sup\limits_{B_r(Z)} u_0  \sim r^{\frac{2+\beta}{1+\beta+\alpha}},
$$
 for any free boundary point $Z\in \partial \{u_0 > 0 \}.$ The proof of this fact follows as in \cite[Section 8]{ART}, see also \cite{AT, RT}. At least for non-degenerate limiting solutions, when $\alpha \ge 0$, a combination of Theorem \ref{MAIN} and \cite[Corollary 3.2]{ART}, gives that $u_0$ is precisely $C^{\gamma}$ up to the free boundary. That is, $C^\gamma$ regularity holds locally around the free boundary. 

Indeed, in such a scenario, for $d:= \dist (Z, \partial \{u_0>0\}) \ll 1$, we initially estimate from \cite[Corollary 3.2]{ART} (see also \cite{IS})
\begin{equation}\label{final rmk sct3 01}
	[u_0]_{C^\gamma(B_{d/4}(Z))} \lesssim \frac{1}{d^{\gamma}} \left ( \|u_0\|_{L^{\infty}(B_{d/2}(Z))} +  d^{\frac{2+\beta}{1+\beta}} \cdot  \|u_0^{-\alpha}\|^{\frac{1}{1+\beta}}_{L^{\infty}(B_{d/2}(Z))} \right ).
\end{equation}
Applying Theorem \ref{MAIN} and non-degeneracy respectively, we estimate
\begin{equation}\label{final rmk sct3 02}
	 \|u_0\|_{L^{\infty}(B_{d/2}(Z))} \lesssim d^\gamma \quad \text{ and } \quad  \|u_0^{-\alpha}\|^{\frac{1}{1+\beta}}_{L^{\infty}(B_{d/2}(Z))}  \lesssim d^{\frac{-\alpha \gamma}{1+\beta}}.
\end{equation}
Finally, plugging \eqref{final rmk sct3 02} into \eqref{final rmk sct3 01}, and taking into account the precise value of $\gamma$, \eqref{def gamma}, we verify that the $C^{\gamma}$ norm of non-degenerate limiting solutions is under control up the the free boundary. 

For improved estimates that hold exclusively along  the free boundaries, see \cite{PT, T1, T2}.

\section{Universal Compactness} \label{sct aux}

This Section is devoted to the proof of Theorem \ref{main_prop}, where it is established the key universal compactness property for solutions, independent of the infimum of $u$.

\bigskip

	We restrict the proof to the case $\alpha > 0$, as the other range follows from \cite{IS, ART}, once one multiplies the equation by $u^{-\alpha}$. Define $v := u^{1/\gamma}$, where $\gamma$ is given by \eqref{def gamma}. We will show that $v$ is uniformly continuous independent of its infimum value. Fixed $X_0 \in \mathcal{O}$, let us denote by $d := \dist (X_0, \partial \mathcal{O})$; if $ \dist (X_0, \partial \mathcal{O})=+\infty,$ then we fix $d$ to be any arbitrary large (but finite) number. With no loss of generality, we will assume $X_0 = 0$. We will show that for any $\delta>0$ given, there exist $\varrho_\delta \ll 1$ and $\mathrm{L}_\delta \gg 1$ such that
	\begin{equation}\label{propeq0}
			\sup\limits_{  \bar{\Omega} \times \bar{\Omega}} \left \{v(\varrho_\delta X) - v(\varrho_\delta Y) - \mathrm{L}_\delta \omega (|X-Y|) - \kappa \cdot \left (|X|^2 + |Y|^2 \right )\right \} \le \delta,
\end{equation}
	where $\omega(\zeta) = \zeta - \frac{1}{10\sqrt{d}} \zeta^{3/2}$, for $\zeta \le d$, $\omega(\zeta) =  \frac{9}{10}d$ for $\zeta \ge d$.  The parameter $\kappa >0$ is chosen such that
	\begin{equation} \label{propeq_kappa}
		 \dfrac{8\|v\|_\infty}{d^2} \le \kappa.
	\end{equation}
	Notice that in the case $d =+\infty$, then  $\kappa>0$ can be chosen arbitrarily small. 
	
	In order to verify \eqref{propeq0}, let us suppose, for the sake of contradiction, that
	\begin{equation}\label{Phi}
		\sup\limits_{ \bar{\Omega} \times \bar{\Omega}} \left \{v(\varrho_\delta X) - v(\varrho_\delta Y) - \mathrm{L}_\delta \omega (|X-Y|) - \kappa \cdot \left (|X|^2 + |Y|^2 \right )\right \} > \delta.
	\end{equation}
	We shall reach an inconsistency by selecting $ \mathrm{L}_\delta$ big enough and $\varrho_\delta$ tiny. Let $(\bar{X}, \bar{Y})$ denote a pair of points where such a maximum is attained. It  follows from the thesis that
	\begin{equation}\label{propeq1}
		 \kappa \cdot  (|\bar{X}|^2 + |\bar{Y}|^2 ) < 2 \|v\|_\infty,
	\end{equation}
 	thus, $\bar{X}$ and $\bar{Y}$ are interior points and $|\bar{X}- \bar{Y}| < d$. Clearly $\bar{X} \not = \bar{Y}$, otherwise we obtain a direct contradiction on \eqref{Phi}. Define in the sequel the vectors
	\begin{eqnarray} 
		\xi_X := \mathrm{L}_\delta \omega'(|\bar{X} -\bar{Y}|)\eta +  2 \kappa  \bar{X}	\label{propeq2}\\
		\xi_Y := \mathrm{L}_\delta \omega'(|\bar{X} -\bar{Y}|)\eta -  2 \kappa \bar{Y}	\label{propeq3},
	\end{eqnarray}
	where $\eta := \frac{\bar{X} -\bar{Y}}{|\bar{X} -\bar{Y}|}$. Notice that 
	\begin{equation}\label{propeq3.01}
		\omega'(|\bar{X} - \bar{Y}|) \ge \omega'(r) = \frac{17}{20}.
	\end{equation}
	Also, 
	\begin{equation}\label{propeq3.02}
		2\kappa \cdot \max \left \{ \left | \bar{X} \right |,  \left |  \bar{Y}\right | \right \} < \dfrac{8\|v\|_\infty}{d},
	\end{equation}
	thus
	\begin{eqnarray} 
		\xi_X &\approx & \sigma_0 \mathrm{L}_\delta +  \text{O}(\frac{\|v\|_\infty}{d})	\label{propeq2.5}\\
		\xi_Y  & \approx &   \sigma_0 \mathrm{L}_\delta +  \text{O}(\frac{\|v\|_\infty}{d})	\label{propeq3.5},
	\end{eqnarray}
	for a nonzero vector $\sigma_0$. 
	From Jensen-Ishii's approximation Lemma, see \cite[Theorem 3.2]{UserG},  for $i > 0$ small enough, it is possible to find matrices $M_X$ and $M_Y$ with 
	\begin{eqnarray}
		(\xi_X, M_X)& \in &\mathcal{J}^{-}(v, \bar{X}), \label{propeq2.1}\\
		(\xi_Y, M_Y) &  \in & \mathcal{J}^{+}(v, \bar{Y}),  \label{propeq2.2}
	\end{eqnarray} 
where $\mathcal{J}^{-}$ and $\mathcal{J}^{+}$ denote the subjet and superjet respectively, verifying the following matrix inequality
	\begin{equation}\label{propeq4}
		\begin{pmatrix}
			M_X & 0 \\
			0 & -M_Y 
\end{pmatrix}
\leq 
\begin{pmatrix}
		Z  & -Z \\
		-Z & Z
\end{pmatrix} + ( 2\kappa +i) \cdot \text{Id}_{2n\times 2n} , 	
\end{equation}	
where 
\begin{equation}\label{propeq5}
	\begin{array}{lll}
		Z&=& L_\delta \omega''(|\bar{X} -\bar{Y}|) \dfrac{ (\bar{X} -\bar{Y}) \otimes (\bar{X} -\bar{Y}) }{|\bar{X} -\bar{Y}|^2} + \\ 
		&+& \dfrac{\omega'(|\bar{X} -\bar{Y}|)}{|\bar{X} -\bar{Y}|} \left \{ \text{Id}_{n\times n}  - \dfrac{ (\bar{X} -\bar{Y}) \otimes (\bar{X} -\bar{Y}) }{|\bar{X} -\bar{Y}|^2}  \right \}.
	\end{array}
\end{equation}
Applying inequality \eqref{propeq4} to vectors of the form $(\xi, \xi)$, we conclude 
\begin{equation}\label{propeq6}
		\text{Spect}(M_Y - M_X) \in (- 4\kappa - i , +\infty).
\end{equation}
However, if we apply to the special vector $(\eta, -\eta)$, we conclude 
\begin{equation}\label{propeq7}
		\text{Spect}(M_Y - M_X) \cap (\frac{c}{\sqrt{d}} \mathrm{L}_\delta - 4\kappa - i,  +\infty) \not = \emptyset,
\end{equation}
for a universal number $c>0$, depending only upon our choice for $\omega$. Combining \eqref{propeq6} and \eqref{propeq7}, we end up with
\begin{equation}\label{propeq7.5}
		\left ( \frac{c}{\sqrt{d}} \mathrm{L}_\delta - n(4\kappa + i) \right ) + \text{Trace} (M_X)  < \text{Trace} (M_Y).
\end{equation}
Notice that if we choose  
\begin{equation}\label{propeq7.2}
	\mathrm{L}_\delta \gtrsim  \|v\|_\infty d^{-3/2} + i \sqrt{d} ,
\end{equation}
then the term $\frac{c}{\sqrt{d}} \mathrm{L}_\delta - n(4\kappa + i) $ becomes positive.  Also, since from our thesis
\begin{equation} \label{propeq16.00}
	v(\varrho_\delta \bar{X}) - v(\varrho_\delta \bar{Y}) > \delta,
\end{equation}
we have the upper control
\begin{equation}  \label{propeq16.1} 
	   v^{-1}(\varrho_\delta \bar{X}) <  \dfrac{1}{\delta}.
\end{equation}
It also follows directly from \eqref{propeq16.00} the lower control
\begin{equation} \label{propeq16.2} 
	v^{-1}(\varrho_\delta \bar{Y})  >  \frac{\delta}{\|v\|_\infty^2} + v^{-1}(\varrho_\delta \bar{X}).
\end{equation}
 
Next, due to the sharp choice of $\gamma$,  the function $v$ satisfies
$$
	\Delta v(X) =\left ( (1-\gamma) |\nabla v(X)|^2 + \dfrac{1}{\gamma^{1+\beta}} |\nabla v(X)|^{-\beta} f(X) \right )  v^{-1}(X),
$$
and hence $v(\varrho_\delta X)$ verifies the equation
\begin{equation}\label{eq scaled}
	\Delta \phi - (1-\gamma) |\nabla \phi|^2 \phi^{-1} =  \varrho_\delta^{2+\beta} \dfrac{1}{\gamma^{1+\beta}} |\nabla \phi (X)|^{-\beta} f(\varrho_\delta X) \phi^{-1}(X).
\end{equation}
In the sequel, we confront \eqref{eq scaled} with \eqref{propeq2.1} and \eqref{propeq2.2} as to write up the following pointwise inequalities 
\begin{eqnarray}
	\text{Trace} (M_X)   &\ge&  \label{propeq8} \left (  (1-\gamma) |\xi_X|^2 +  \dfrac{ \varrho_\delta^{2+\beta}}{\gamma^{1+\beta}} |\xi_X|^{-\beta} f(\varrho_\delta \bar{X}) \right )  v^{-1}(\varrho_\delta \bar{X})\\
	\text{Trace} (M_Y)   &\le&  \label{propeq9} \left (  (1-\gamma) |\xi_Y|^2 +  \dfrac{ \varrho_\delta^{2+\beta}}{\gamma^{1+\beta}} |\xi_Y|^{-\beta} f(\varrho_\delta \bar{Y}) \right )  v^{-1}(\varrho_\delta \bar{Y}).
\end{eqnarray}
Combining \eqref{propeq8}, \eqref{propeq9}  and  \eqref{propeq7.5}, taking into account the choice \eqref{propeq7.2}, we obtain
\begin{equation} \label{propeq17}
	 \left  ( \dfrac{(\gamma-1)|\xi_Y|^2 -  \frac{ \varrho_\delta^{2+\beta}}{\gamma^{1+\beta}} |\xi_Y|^{-\beta} f(\varrho_\delta \bar{Y})}{(\gamma-1) |\xi_X|^2 -  \frac{ \varrho_\delta^{2+\beta}}{\gamma^{1+\beta}} |\xi_X|^{-\beta} f(\varrho_\delta \bar{X})}  \right ) v^{-1}(\varrho_\delta \bar{Y}) \le v^{-1}(\varrho_\delta \bar{X}),
\end{equation}
provided $L_\delta \gg 1$ is chosen even bigger, if necessary. In view of \eqref{propeq2.5} and \eqref{propeq3.5}, given  $\iota \ll 1$, to be chosen {\it a posteriori} depending only on $\|v\|_\infty$ and $\delta$, for 
$\mathrm{L}_\delta \gg 1,$ and $\varrho_\delta \ll 1$ 
depending on the modulus of continuity of $f$ and $\iota$ -- but choices independent of the infimum of $v$ -- there holds
\begin{equation} \label{propeq17.2}
	\dfrac{(\gamma-1)|\xi_X|^2 -  \frac{ \varrho_\delta^{2+\beta}}{\gamma^{1+\beta}} |\xi_X|^{-\beta} f(\varrho_\delta \bar{X})}{(\gamma-1) |\xi_Y|^2 -  \frac{ \varrho_\delta^{2+\beta}}{\gamma^{1+\beta}} |\xi_Y|^{-\beta} f(\varrho_\delta \bar{Y})} \le (1+\iota).
\end{equation}
Thus, from \eqref{propeq17}, taking i`ount \eqref{propeq16.1} we find
\begin{equation} \label{propeq17.3}
	 v^{-1} (\varrho_\delta  \bar{Y}) \le  \iota \dfrac{1}{\delta} + v^{-1}(\varrho_\delta  \bar{X}).
\end{equation}
In the sequel, we select
\begin{equation} \label{propeq18.1}
 	\iota \le  \frac{\delta^2}{10\|v\|_\infty^2},
\end{equation}
which gives a  contradiction on  \eqref{propeq16.2}. We have shown that, for any pair of interior points, $X, Y \in \mathcal{O}' \Subset \mathcal{O}$, and any positive number $\delta > 0$, there holds
\begin{equation} \label{propeq20}
	|v(X) - v(Y)| \le \varrho_\delta^{-1}\max \{\mathrm{L}_\delta, \tau, \frac{\|v\|_\infty^4}{d\delta^4}\} \cdot |X-Y| + 2 \kappa  \varrho_\delta^{-2} |X-Y|^2 + \delta,
\end{equation}
where $\tau$ accounts the choices made at \eqref{propeq7.2} and \eqref{propeq17.2}. In particular, all choices are independent of infimum of $v$. It is easy to check that \eqref{propeq20} gives a universal modulus of continuity for $v$. The proof of Theorem \ref{main_prop} follows. \hfil $\square$

\bigskip

We conclude this Section by commenting on the universal modulus of continuity $\varpi$ found in Theorem \ref{main_prop}. As inspection on \eqref{propeq20} reveals the existence of a strictly decreasing function $\Omega \colon (0, 1) \to (0,\infty)$, such that, for any $0< \delta< 1$, there holds
\begin{equation} \label{propeq21}
	|v(X) - v(Y)| \le \Omega(\delta)\cdot |X-Y| + \delta,
\end{equation}
provided $|X-Y| \le 1$. Indeed, $\Omega(\delta) \approx \varrho_\delta^{-1} L_\delta$, with $\varrho_\delta$ depending only on universal parameters and on the modulus of continuity of $f$ and $L_\delta$ depending only on universal numbers. Let us define the (increasing) function $\Xi(\delta) := \delta^{-1} \Omega(\delta)$ and in the sequel we set 
\begin{equation} \label{propeq22}
	\varpi(t) := \Xi^{-1}(\frac{1}{t}),
\end{equation}
which is a modulus of continuity. From the very definition of $\varpi(t)$, it follows that
\begin{equation} \label{propeq23}
	\frac{1}{\varpi(t)}\Omega(\varpi(t)) =  \Xi(\varpi(t)) = \frac{1}{t}.
\end{equation}
Now, given two points $X, Y$ with $0<|X-Y| \le 1$, select in \eqref{propeq21} $\delta = \varpi(|X-Y|)$, taking into account \eqref{propeq23}, and estimate:
\begin{equation} \label{propeq24}
	\begin{array}{lll}
		|v(X) - v(Y)| &\le& \Omega(\varpi(|X-Y|))\cdot |X-Y| +  \varpi(|X-Y|) \\
		&\le & 2\varpi(|X-Y|).
	\end{array}
\end{equation}
Hence $2\varpi$ is a modulus of continuity for $v$, which is universal. 

Let us further mention that when $\Omega(\delta) \approx \delta^{-M}$ -- which is the case when $f$ is H\"older continuous -- then Theorem  \ref{main_prop} provides a universal $C^{0,\frac{1}{M+1}}$--H\"older continuity estimate for $v$. 


\section{Regularity along the free boundary}

In this Section we aim to prove that limiting solutions to \eqref{eq fb} are of class $C^{\gamma}$ along the free boundary $\{u_0 > 0\}$. The strategy will be based on a flatness improvement device which allows us to control the growth of $u_0$ in proper geometric dyadic balls. Initially we need a Lemma.

\begin{lemma}\label{Flat Lemma1} Let $u\colon B_1 \to \mathbb{R}$ be a positive solution to 
	\begin{equation}\label{Eq Opt Est}	
		u^\alpha |D u|^\beta  \Delta u = f(X), \quad \text{in } B_1,
	\end{equation}
	satisfying $|u| \le 1$. Given a positive number $\theta > 0$, there exists $\eta = \eta(\theta) > 0$ depending only on $\theta$ and the universal parameters, such that if 
$$
	\inf\limits_{B_{1/2}} u \le \eta \quad \& \quad \|f\|_{L^\infty(B_1)} \le \eta^{1+\alpha+\beta},
$$
then
$$
	\sup\limits_{B_{1/4}} u \le \theta.
$$
\end{lemma}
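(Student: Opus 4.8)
The plan is to argue by compactness/contradiction, exploiting the universal equicontinuity from Theorem \ref{main_prop}. Suppose the statement fails for some fixed $\theta > 0$. Then there is a sequence $\eta_j \to 0$ and positive viscosity solutions $u_j \colon B_1 \to \mathbb{R}$ with $|u_j| \le 1$, satisfying $u_j^\alpha |Du_j|^\beta \Delta u_j = f_j(X)$ with $\|f_j\|_{L^\infty(B_1)} \le \eta_j^{1+\alpha+\beta}$, and with $\inf_{B_{1/2}} u_j \le \eta_j$, yet $\sup_{B_{1/4}} u_j > \theta$. Each $u_j$ is a bounded positive solution of a singular equation whose right-hand side $f_j$ is bounded (in fact $\|f_j\|_\infty \le 1$); strictly speaking Theorem \ref{main_prop} is stated under the nondegeneracy assumption \eqref{sign cond}, but its proof in Section \ref{sct aux} only uses an upper bound $\|f\|_\infty \le M$ and uniform continuity (here $f_j$ is constant, or one replaces the universal modulus accordingly), so we obtain a universal modulus of continuity $\varpi$ for the $u_j$ on $\overline{B_{3/8}}$, independent of $j$ and crucially independent of $\inf u_j$. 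By Arzelà--Ascoli, along a subsequence $u_j \to u_\infty$ locally uniformly in $B_{3/8}$, with $u_\infty \ge 0$, $|u_\infty| \le 1$.

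Next I would identify the limit equation. Since $\|f_j\|_\infty \to 0$, the rescaled/limiting object $u_\infty$ should satisfy, in the viscosity sense, the homogeneous relation on its positivity set: $u_\infty^\alpha |Du_\infty|^\beta \Delta u_\infty = 0$ in $\{u_\infty > 0\} \cap B_{3/8}$, hence $\Delta u_\infty = 0$ there (since on the positivity set the factors $u_\infty^\alpha$ and, where $|Du_\infty| \ne 0$, $|Du_\infty|^\beta$ are harmless; where $Du_\infty = 0$ one argues directly at touching test functions). The key point is the behavior at points where $u_\infty$ vanishes: passing $\inf_{B_{1/2}} u_j \le \eta_j \to 0$ to the limit gives $\inf_{\overline{B_{1/2}}} u_\infty = 0$, i.e. there is a point $X_\infty \in \overline{B_{1/2}}$ — and by the uniform convergence on $B_{3/8}$, at least the relevant estimate holds on $\overline{B_{3/8}}$ — with $u_\infty(X_\infty) = 0$. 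Here I need to be slightly careful about which ball: I would instead apply Theorem \ref{main_prop} on, say, $B_{0.49}$ so that the point $X_\infty \in \overline{B_{1/2}}$ where the infimum is (nearly) attained lies in the domain of equicontinuity, and keep $B_{1/4}$ comfortably interior.

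Now I invoke the strong minimum principle for $u_\infty$. On any ball $B \Subset \{u_\infty > 0\}$, $u_\infty$ is harmonic and nonnegative; but $u_\infty$ attains the value $0$ at an interior point of $B_{3/8}$. A connectedness/propagation argument (strong maximum principle, or a Harnack-chain argument for nonnegative functions that are locally harmonic away from their zero set) forces $u_\infty \equiv 0$ on $B_{3/8}$: indeed if $u_\infty > 0$ somewhere in $B_{3/8}$ then the open set $\{u_\infty > 0\} \cap B_{3/8}$ is nonempty with a boundary point $X_\infty$ in the interior at which a harmonic nonnegative function touches zero from above, contradicting Hopf's lemma / strong maximum principle unless the component is trivial; iterating over components one gets $u_\infty \equiv 0$. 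In particular $\sup_{B_{1/4}} u_\infty = 0$. But uniform convergence gives $\sup_{B_{1/4}} u_\infty = \lim_j \sup_{B_{1/4}} u_j \ge \theta > 0$, a contradiction. This proves the Lemma.

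The main obstacle I anticipate is the passage to the limit equation and the handling of the singular/degenerate coefficients in the viscosity framework: one must check that the universal modulus of continuity of Theorem \ref{main_prop} genuinely applies with $f_j$ possibly non-uniformly-continuous as a family (here trivial since constant, but worth a remark), and — more delicately — that $u_\infty$ is a viscosity solution of the homogeneous equation across its free boundary in the precise sense needed for the strong minimum principle. Concretely, one needs that a nonnegative viscosity supersolution of $u^\alpha |Du|^\beta \Delta u \le C$ with $C$ small, after the limit $C \to 0$, cannot have an interior zero without vanishing identically; this is where the structural hypothesis $\alpha > -(1+\beta)$ (and here $\alpha > 0$) and the argument of Section \ref{sct aux} — that estimates do not see $\inf u$ — are doing the real work. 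I would isolate this as the crux and, if needed, prove the limiting strong minimum principle by a direct barrier argument rather than quoting a black box.
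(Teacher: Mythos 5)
Your compactness strategy has a fatal gap at its central step: the claim that a nonnegative limit $u_\infty$ which is harmonic in its positivity set and vanishes at an interior point of $B_{3/8}$ must vanish identically there. This is false. The function $u_\infty(X)=\max(X_1,0)$ is nonnegative, harmonic in $\{u_\infty>0\}$, vanishes on an open half-ball, yet is not identically zero; Hopf's lemma is not contradicted (the normal derivative at the free boundary equals $1>0$ -- Hopf asserts strict positivity of that derivative, not impossibility), and the strong maximum principle does not apply because the zero lies on the boundary of the positivity set, where the equation degenerates completely through the factor $u^\alpha|Du|^\beta$, not in the interior of a region where $u_\infty$ solves a nondegenerate elliptic equation. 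Indeed the whole paper is about nontrivial nonnegative limiting solutions with nonempty zero sets, so no qualitative ``propagation of zeros'' for the limit can be true in the generality you invoke, and the barrier argument you defer to cannot exist in that form. The information that excludes a limit like $\max(X_1,0)$ is quantitative and must be extracted from the positive approximations $u_j$ before passing to the limit. This is exactly what the paper's proof does, and it is much shorter: set $\iota_j:=\inf_{B_{1/2}}u_j$ and $v_j:=u_j/\iota_j$, so that $\inf_{B_{1/2}}v_j=1$ and, since $\alpha>0$ and $\|f_j\|_\infty\le\iota_j^{1+\alpha+\beta}$, the normalized function satisfies $|Dv_j|^\beta\Delta v_j=\mathfrak g_j$ with $\mathfrak g_j$ uniformly bounded; the Harnack inequality of Imbert--Silvestre for such degenerate equations then bounds $\sup_{B_{1/4}}v_j$ universally, forcing $\sup_{B_{1/4}}u_j\le C\,\iota_j\to 0$, which contradicts $\sup_{B_{1/4}}u_j\ge\theta$. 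In other words, the crux of the Lemma is a Harnack inequality applied after dividing by the infimum, not a limiting strong minimum principle.

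A secondary problem is your appeal to Theorem \ref{main_prop} for $j$-uniform equicontinuity. The modulus produced there depends on the constant $c_0$ of \eqref{sign cond} and on the modulus of continuity of $f$: both enter crucially in the step \eqref{propeq17.2}, where $f(\varrho_\delta\bar X)/f(\varrho_\delta\bar Y)$ must be made close to $1$, which requires $f$ bounded away from zero relative to its oscillation. Along your contradiction sequence $\|f_j\|_\infty\to 0$, so there is no uniform $c_0$, and your parenthetical remark that ``$f_j$ is constant'' is not correct -- the Lemma only assumes an $L^\infty$ bound on $f$. So even the compactness step of your argument is not covered by the theorem as proved; and repairing it would again lead you toward the normalized equation and Harnack estimate that constitute the paper's actual proof.
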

\begin{proof}
	Let us suppose, for the sake of contradiction, that the thesis of the Lemma fails to hold. That is, there exists a sequence of positive functions $u_j \colon B_1 \to \mathbb{R}$, satisfying,
	$$
		|u_j| \le 1; \quad \iota_j := \inf\limits_{B_{1/2}} u_j = \text{o}(1), \quad \left \| f_j(X) \right \|_{L^\infty(B_1)} \le \iota_j^{1+\alpha+\beta},
	$$
	where $f_j(X) =: |\nabla u_j|^\beta u_j^\alpha \Delta u_j$ in the viscosity sense and $\theta_0 > 0$, such that 
	$$
		\sup\limits_{B_{1/4}} u_j \ge \theta_0.
	$$
	In the sequel, we define the normalized function
	$$
		v_j(X) := \dfrac{u_j(X)}{\iota_j}.
	$$
	It is clear that $\inf v_j =1$. Direct computations yield
	$$
		|D v_j|^\beta \Delta v_j = \dfrac{v_j^{-\alpha}}{\iota_j^{1+\alpha+\beta}} \cdot f_{j}(X) =: \mathfrak{g}_j(X),
	$$
	which is a bounded function in $B_{1/2}$. Now, applying Harnack inequality, see for instance \cite{IS2}, we deduce
	$$
		C \ge \sup_{B_{1/4}} v_j (X) \ge \dfrac{\sup\limits_{B_{1/4}} u_j (X)}{\iota_j},
	$$
	which gives a contradiction if $\iota_j \ll 1$. 
\end{proof}

\medskip

We now proceed into the proof of Theorem \ref{MAIN}. Hereafter, let us label:
\begin{equation}\label{eta star}
	\eta_\star := \eta(4^{-\gamma});
\end{equation}
i.e., $\eta_\star$ is the positive number given by Lemma \ref{Flat Lemma1}, when one takes $\theta =4^{-\gamma}$. Recall 
$$
	\gamma = \dfrac{2+\beta}{1+\alpha+\beta},
$$
gives the aimed optimal regularity.

Our initial observation is that if $u$ is a positive solution to Equation \eqref{Eq Opt Est}, then the scaled function $v(X) := u(\varrho X)$, with 
$$
	\varrho := \eta_\star^{1/\gamma} \sqrt[2+\beta]{\|f\|_\infty^{-1}}
$$
solves the same equation in $B_1$ with the right hand side bounded by $ \eta_\star^{1+\alpha+\beta}$.
Hence, from now on, we can assume, modulo a fixed zoom-in, that the RHS of  Equation \eqref{Eq Opt Est} is bounded by $ \eta_\star^{1+\alpha+\beta}$.

\medskip

Now, let $u_0$ be a nonnegative limiting solution and assume $0 \in \partial \{u_0 > 0 \}$. We want to show that there exists a universal constant $C>0$ such that
$$
	\sup\limits_{B_r} u_0 (X) \le C r^\gamma.
$$
For that, let $u_j$ be a sequence of positive solutions converging locally uniformly to $u_0$. Since $u_j(0) = \text{o}(1)$, there exists $j_0 \in \mathbb{N}$, such that if $j\ge j_0$,
$$
	\inf\limits_{B_{1/2}} u_j \le u_j(0) \le \eta_\star.
$$
It then follows from Lemma \ref{Flat Lemma1} that
\begin{equation}\label{thm_final1}
	\sup\limits_{B_{1/4}} u_j(X) \le 4^{-\gamma}, \quad \forall j \ge j_1.
\end{equation}
In the sequel, we define $v_j^1 \colon B_{1} \to \mathbb{R}$, by
$$
	v_j^1(X) := 4^{\gamma} u_j(\dfrac{1}{4} X).
$$
It follows from \eqref{thm_final1} that $v^1_j$ is a normalized function satisfying
$$
	 \left |  (v_j^1)^\alpha |D v_j^1|^\beta \Delta v^1_j \right | \le \eta_\star^{1+\alpha+\beta},
$$
in the viscosity sense. We can now choose $j_1 > j_0$, such that 
$$
	\inf\limits_{B_{1/2}} v_j^1 \le 4^\gamma u_j(0) \le \eta_\star.
$$
Applying Lemma \ref{Flat Lemma1} to $v^1_j$ and rescaling it back to $u_j$, gives
\begin{equation}\label{thm_final2}
	\sup\limits_{B_{1/16}} u_j(X) \le 16^{-\gamma}, \quad \forall j \ge j_1.
\end{equation}
Continuing the reasoning, we define $v_j^2 \colon B_{1} \to \mathbb{R}$, by
$$
	v_j^2(X) := 4^{\gamma} v^1_j(\dfrac{1}{4} X) = 16^\gamma u_j(\dfrac{1}{16} X).
$$
Again, through scaling arguments, one verifies that
$$
	 \left |  (v_j^2)^\alpha |D v_j^2|^\beta \Delta v^2_j \right | \le \eta_\star^{1+\alpha+\beta},
$$
in the viscosity sense. For another natural number, $j_2 > j_1$, there holds
$$
	\inf\limits_{B_{1/2}} v^2_j \le 16^\gamma \cdot u_j(0) \le \eta_\star,
$$
for all $j\ge j_2$. Applying Lemma \ref{Flat Lemma1} to $v_j^2$ and rescaling the estimate back to $u_j$  gives
\begin{equation}\label{thm_final3}
	\sup\limits_{B_{1/64}} u_j(X) \le 64^{-\gamma}, \quad \forall j \ge j_2.
\end{equation}
Proceeding inductively, we prove that for any natural number $k \ge 1$, there exists $j_k \in \mathbb{N}$ such that
\begin{equation}\label{thm_final4}
	\sup\limits_{B_{4^{-k}}} u_j(X) \le 4^{-k\gamma}, \quad \forall j \ge j_k.
\end{equation}
Finally, given $0 < r < 1/4$, let $k$ be such that $4^{-(k+1)} < r \le 4^{-k}$. We can estimate
$$
	\sup\limits_{B_r} u_0 \le \sup\limits_{B_{4^{-k}}} u_0 \le 4^{-k \gamma} \le 4^\gamma r^\gamma,
$$
and the Theorem is proven.  $\square$



\bigskip

\bibliographystyle{amsplain, amsalpha}

\bigskip

\noindent \textsc{Eduardo V. Teixeira} \\
\noindent Universidade Federal do Cear\'a \\
\noindent Departamento de Matem\'atica \\
\noindent Campus do Pici - Bloco 914, \\
\noindent Fortaleza, CE - Brazil 60.455-760 \\
\noindent \texttt{teixeira@mat.ufc.br}

\end{document}